\def\vint_#1{\mathchoice%
          {\mathop{\kern 0.2em\vrule width 0.6em height 0.69678ex depth -0.58065ex
                  \kern -0.8em \intop}\nolimits_{\kern -0.4em#1}}%
          {\mathop{\kern 0.1em\vrule width 0.5em height 0.69678ex depth -0.60387ex
                  \kern -0.6em \intop}\nolimits_{#1}}%
          {\mathop{\kern 0.1em\vrule width 0.5em height 0.69678ex
              depth -0.60387ex
                  \kern -0.6em \intop}\nolimits_{#1}}%
          {\mathop{\kern 0.1em\vrule width 0.5em height 0.69678ex depth -0.60387ex
                  \kern -0.6em \intop}\nolimits_{#1}}}
\def\vintslides_#1{\mathchoice%
          {\mathop{\kern 0.1em\vrule width 0.5em height 0.697ex depth -0.581ex
                  \kern -0.6em \intop}\nolimits_{\kern -0.4em#1}}%
          {\mathop{\kern 0.1em\vrule width 0.3em height 0.697ex depth -0.604ex
                  \kern -0.4em \intop}\nolimits_{#1}}%
          {\mathop{\kern 0.1em\vrule width 0.3em height 0.697ex de pth -0.604ex
                  \kern -0.4em \intop}\nolimits_{#1}}%
          {\mathop{\kern 0.1em\vrule width 0.3em height 0.697ex depth -0.604ex
                  \kern -0.4em \intop}\nolimits_{#1}}}
\numberwithin{equation}{section}
\newtheorem{theorem}{Theorem}[section]
\newtheorem{lemma}[theorem]{Lemma}
\newtheorem{pro}[theorem]{Proposition}
\newtheorem{corollary}[theorem]{Corollary}
\theoremstyle{definition}
\newtheorem{definition}[theorem]{Definition}
\theoremstyle{remark}
\renewcommand{\d}{{\mathrm d}}
\newcommand{\R}{\mathbb{R}}
\newcommand{\N}{\mathbb{N}}
\newcommand{\F}{\mathcal{F}}
\newcommand{\Int}{\mathrm{int\,}}
\newcommand{\diam}{\mathrm{diam}}
\newcommand{\dist}{\mathrm{dist}}
\begin{document}

\title[A density result on Orlicz-Sobolev spaces in the plane]
{A density result on Orlicz-Sobolev spaces in the plane}

\author{Walter A.  Ortiz}
\address{Department of Mathematics\\
            Universitat Aut\`onoma de Barcelona\\
            Spain}
\email{waortiz@mat.uab.cat}

\author{Tapio Rajala}
\address{University of Jyvaskyla \\
         Department of Mathematics and Statistics \\
         P.O. Box 35 (MaD) \\
         FI-40014 University of Jyvaskyla \\
         Finland}
\email{tapio.m.rajala@jyu.fi}

\thanks{TR acknowledges the support by the Academy of Finland, projects no. 274372 and 314789.}
\subjclass[2000]{Primary 46E35.}
\thanks{WO acknowledges the support by  MINECO Grants MTM201344304P and EEBBI1812876 (Spain).}
\keywords{Sobolev space, Orlicz-Sobolev space, density}
\date{\today}


\begin{abstract}
We show the density of smooth Sobolev functions $W^{k,\infty}(\Omega)\cap C^\infty(\Omega)$  in the Orlicz-Sobolev spaces $L^{k,\Psi}(\Omega)$ for bounded simply connected planar domains $\Omega$ and doubling Young functions $\Psi$.
\end{abstract}

\maketitle

\section{Introduction}

Orlicz-Sobolev spaces appear naturally in analysis  as generalizations of the usual Sobolev spaces, for instance when one studies sharp assumptions for mappings of finite distortion \cite{IKO,KKMOZ}.
Orlicz-Sobolev spaces appear also in many other contexts and have been studied by their own right, see for instance \cite{AHS,A75,B10,C96b,C96,DJthesis,dS81,FLS,EGO,H12,H10,HT10,KJF,RR,Tuominen} for a 
sample of the literature.

An important basic question in the theory of function spaces is the relation between different spaces. Answers to this question can be given for instance in terms of embeddings and density results.
In this paper, we show that in Orlicz-Sobolev spaces on bounded simply connected planar domains we can approximate functions with bounded derivatives if we consider only the highest order derivatives in the norm.
\begin{theorem}\label{thm:main}
Let $k\in\mathbb{N}$, $\Psi$ be a doubling Young function, and $\Omega\subset\mathbb{R}^2$ be a bounded simply connected domain. Then the subspace $W^{k,\infty}(\Omega)\cap C^{\infty}(\Omega)$ is dense in the space $L^{k,\Psi}(\Omega).$
\end{theorem}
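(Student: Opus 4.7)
The plan is to reduce the density on $\Omega$ to the density on a sequence of nice subdomains exhausting $\Omega$, and to control the boundary contribution through the absolute continuity of the $L^\Psi$ modular. The simply connected planar hypothesis enters via the Riemann mapping, which supplies the exhaustion.

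Step 1 (Smooth exhaustion). By the Riemann mapping theorem, fix a conformal equivalence $\varphi \colon \mathbb{D} \to \Omega$ and set $\Omega_n = \varphi(\{|z| < 1 - 1/n\})$. Then $\{\Omega_n\}_{n \geq 1}$ is an increasing sequence of simply connected Jordan subdomains of $\Omega$ with analytic boundaries, satisfying $\Omega_n \subset \Omega_{n+1}$ and $\bigcup_n \Omega_n = \Omega$. In particular each $\Omega_n$ is a bounded Lipschitz domain.

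Step 2 (Density on a nice subdomain). On each $\Omega_n$, a global Orlicz-Poincar\'e inequality (available on Lipschitz domains for doubling $\Psi$) produces a polynomial $P_n$ of degree $\leq k - 1$ with $r_n := u - P_n \in W^{k,\Psi}(\Omega_n)$ and $\|\nabla^\alpha r_n\|_{L^\Psi(\Omega_n)} \lesssim \|\nabla^k u\|_{L^\Psi(\Omega_n)}$ for $|\alpha| \leq k$. A Stein-type extension in the Orlicz-Sobolev category gives $R_n \in W^{k,\Psi}(\mathbb{R}^2)$ with $R_n|_{\Omega_n} = r_n$. Mollifying at scale $\delta > 0$, the function $R_n^\delta := R_n \ast \eta_\delta$ is smooth on $\mathbb{R}^2$, has all derivatives bounded on compact sets, and satisfies $\|\nabla^k R_n^\delta - \nabla^k r_n\|_{L^\Psi(\Omega_n)} \to 0$ as $\delta \to 0$ (the doubling property of $\Psi$ is used here for convolution convergence in the modular).

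Step 3 (Assembly and error). Set $u_\varepsilon = P_n + R_n^\delta$ on $\Omega$. Since $P_n$ is polynomial and $R_n^\delta$ is smooth on $\mathbb{R}^2$, the restriction to the bounded set $\Omega$ lies in $W^{k,\infty}(\Omega) \cap C^\infty(\Omega)$. Using $\nabla^k u_\varepsilon = \nabla^k R_n^\delta$, we have
$$ \|\nabla^k(u - u_\varepsilon)\|_{L^\Psi(\Omega)} \leq \|\nabla^k r_n - \nabla^k R_n^\delta\|_{L^\Psi(\Omega_n)} + \|\nabla^k u\|_{L^\Psi(\Omega \setminus \Omega_n)} + \|\nabla^k R_n^\delta\|_{L^\Psi(\Omega \setminus \Omega_n)}. $$
Given $\varepsilon > 0$, first fix $n$ large enough that the middle term is $< \varepsilon/3$ (by absolute continuity of $L^\Psi$, a consequence of doubling), then fix $\delta > 0$ small enough so that the first and third terms (the latter using $\|\nabla^k R_n^\delta\|_{L^\infty(\Omega)} < \infty$ together with $\|\chi_{\Omega \setminus \Omega_n}\|_{L^\Psi(\Omega)} \to 0$) are each $< \varepsilon/3$.

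The main obstacle is Step 2: establishing the global Orlicz-Poincar\'e decomposition and the Stein-type extension theorem on Lipschitz domains in the Orlicz-Sobolev setting. Both rely crucially on the doubling property of $\Psi$ to substitute for the homogeneity of $L^p$ norms. The role of the simply connected planar hypothesis is then isolated in Step 1, where the Riemann map delivers an exhaustion by analytically-bounded Lipschitz subdomains; without this geometric input, neither the global Poincar\'e inequality nor the Orlicz-Sobolev extension would be available on the approximating domains, and one would be forced back onto a much more delicate Whitney-chain argument.
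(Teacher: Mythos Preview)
Your Step 3 has a genuine gap in the control of the third term $\|\nabla^k R_n^\delta\|_{L^\Psi(\Omega\setminus\Omega_n)}$. You first fix $n$ and then send $\delta\to 0$; but with $n$ fixed the set $\Omega\setminus\Omega_n$ has fixed positive measure, while $\|\nabla^k R_n^\delta\|_{L^\infty}$ has no uniform bound as $\delta\to 0$. In fact $\nabla^k R_n^\delta\to\nabla^k R_n$ in $L^\Psi(\mathbb{R}^2)$, so the third term tends to $\|\nabla^k R_n\|_{L^\Psi(\Omega\setminus\Omega_n)}$, which is in general nonzero. Your parenthetical justification invokes $\|\chi_{\Omega\setminus\Omega_n}\|_{L^\Psi}\to 0$, but that is a limit in $n$, and $n$ is already fixed at this stage; there is no $\delta$ you can pick that makes this term smaller than $\|\nabla^k R_n\|_{L^\Psi(\Omega\setminus\Omega_n)}$.

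The deeper obstruction is that the Stein extension constant for $\Omega_n$, as well as the global Poincar\'e constant on $\Omega_n$, depend on the Lipschitz character of $\partial\Omega_n$. Since $\partial\Omega$ may be arbitrarily irregular for a bounded simply connected planar domain, the curves $\partial\Omega_n=\varphi(\{|z|=1-1/n\})$ can have Lipschitz constants that blow up as $n\to\infty$. Hence you have no uniform control of $\|\nabla^k R_n\|_{L^\Psi(\mathbb{R}^2\setminus\Omega_n)}$, and in particular no reason for the portion landing in $\Omega\setminus\Omega_n$ to be small. In effect, your argument would prove the theorem whenever $\Omega$ itself is an $L^{k,\Psi}$-extension domain (one could then skip the exhaustion altogether), but simply connected planar domains need not be extension domains, and that is precisely the difficulty the theorem addresses. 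The paper avoids any extension: it works entirely inside $\Omega$ via a Whitney decomposition, replaces $u$ by local polynomial approximants near $\partial\Omega$ glued with a partition of unity, and uses simple connectivity through a cyclic ordering of the boundary pieces that keeps all chain lengths uniformly bounded. This is exactly the ``delicate Whitney-chain argument'' you allude to, and it appears to be unavoidable here.
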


Recall that for a domain $\Omega \subset\mathbb{R}^2$ and a Young function $\Psi$, the version of the Orlicz-Sobolev space $L^{k,\Psi}(\Omega)$ used in Theorem \ref{thm:main} is defined as
\begin{align*}
L^{k,\Psi}(\Omega)=\left\{f\in L^{\Psi}(\Omega):\nabla^{\alpha}f\in L^{\Psi}(\Omega)\;\text{ if } |\alpha|=k\right\}.
\end{align*}
The space $L^{k,\Psi}(\Omega)$ is equipped
with the semi-norm $\sum_{|\alpha|=k}\left\|\nabla^{\alpha}f\right\|_{L^{\Psi}(\Omega)}$, where \\
$\|\cdot\|_{L^{\Psi}(\Omega)}$ is the Luxemburg norm. See Section \ref{sec:pr} for more basic information on Orlicz and Orlicz-Sobolev spaces.

We will use a Whitney decomposition of the domain $\Omega \subset \mathbb{R}^2$  and make a polynomial approximation near the boundary $\partial\Omega$. The validity of the approximation is proven using a $\Psi-\Psi$ Poincar\'e inequality.
The form of the polynomial approximation we use here was introduced in \cite{DRS}, where a density result was shown for homogeneous Sobolev spaces on simply connected planar domains. This was then extended to Gromov hyperbolic domains in higher dimensions in \cite{Nandi}.
In turn, both of these results were generalizations of density results for first order Sobolev spaces \cite{KZ,KTZ} which were partly motivated by the recent progress on planar Sobolev extension domains \cite{KRZ,Shvartsman}.

Although for every domain smooth functions are dense in $W^{k,p}(\Omega)$ \cite{MS} and, more generally, in $W^{k,\Psi}(\Omega)$ for doubling $\Psi$ \cite{DT}, the derivatives of the approximating smooth functions might blow up near the boundary. Therefore, the density of other function spaces, such as $W^{k,q}(\Omega)$ in $W^{k,p}(\Omega)$ might be false, see \cite{Koskela,KTZ} for this, and for instance \cite{Amick,Kolsrud,OFarrell} for earlier counter examples on other function spaces.
The density of global smooth functions in $W^{1,p}(\Omega)$ is known for instance for Jordan domains \cite{Lewis,KZ}, but the case for higher order Sobolev spaces is still open. Similarly, in the case $k \ge 2$, the density result presented in Theorem \ref{thm:main} remains still open for the full Orlicz-Sobolev space $W^{k,\Psi}(\Omega)$ as well as for the usual Sobolev space $W^{k,p}(\Omega)$.

In the same way as for the first order Sobolev spaces \cite{KZ}, for $W^{1,\Psi}(\Omega)$ we get a better density result as a corollary of Theorem \ref{thm:main}. This is simply because we may first cut a function in $W^{1,\Psi}(\Omega)$ from above and below introducing a small error in the norm, so that the function becomes an $L^\infty(\Omega)$ function. The remaining approximations do not change the fact that the function is in $L^\infty(\Omega)$.
\begin{corollary}
Let $\Psi$ be a doubling Young function and $\Omega\subset\mathbb{R}^2$ be a bounded simply connected domain. Then the subspace $W^{1,\infty}(\Omega)\cap C^{\infty}(\Omega)$ is dense in the space $W^{1,\Psi}(\Omega).$
\end{corollary}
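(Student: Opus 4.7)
Following the hint in the paragraph preceding the statement, the plan is to split the approximation into a truncation step, which reduces the problem to a bounded function, and then an application of Theorem~\ref{thm:main} with $k=1$.

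First, given $f \in W^{1,\Psi}(\Omega)$ and $\varepsilon > 0$, I would set $f_N := \max(-N,\min(N,f))$ for $N \in \mathbb{N}$. Each $f_N$ lies in $W^{1,\Psi}(\Omega) \cap L^{\infty}(\Omega)$, with $\nabla f_N = \nabla f \cdot \chi_{\{|f|<N\}}$ almost everywhere; hence both $f_N \to f$ and $\nabla f_N \to \nabla f$ pointwise a.e.\ with majorants $|f|$ and $|\nabla f|$ respectively. Since $\Psi$ is doubling, $L^{\Psi}(\Omega)$ enjoys dominated convergence, so $\|f - f_N\|_{L^{\Psi}(\Omega)} + \|\nabla(f - f_N)\|_{L^{\Psi}(\Omega)} \to 0$ as $N \to \infty$, and I can pick $N$ with $\|f - f_N\|_{W^{1,\Psi}(\Omega)} < \varepsilon/2$.

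Next, I apply Theorem~\ref{thm:main} with $k=1$ to $f_N \in L^{1,\Psi}(\Omega)$ to obtain a sequence $g_n \in W^{1,\infty}(\Omega) \cap C^{\infty}(\Omega)$ with $\|\nabla(f_N - g_n)\|_{L^{\Psi}(\Omega)} \to 0$. To upgrade this from semi-norm convergence to full $W^{1,\Psi}$-norm convergence, I rely on the structure of the approximation built in the proof of Theorem~\ref{thm:main}: it is glued, via a partition of unity subordinate to a Whitney decomposition, from local polynomial approximations that for $k=1$ reduce to local averages. Feeding in a bounded $f_N$ therefore produces approximants with $\|g_n\|_{L^{\infty}(\Omega)} \le \|f_N\|_{L^{\infty}(\Omega)}$, converging to $f_N$ pointwise at every Lebesgue point of $f_N$ as the boundary-adapted Whitney cubes shrink. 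Because $\Omega$ is bounded, the constant majorant $2\|f_N\|_{L^{\infty}(\Omega)}\chi_\Omega$ lies in $L^{\Psi}(\Omega)$, and dominated convergence in $L^{\Psi}$ yields $\|f_N - g_n\|_{L^{\Psi}(\Omega)} \to 0$. Choosing $n$ so that $\|f_N - g_n\|_{W^{1,\Psi}(\Omega)} < \varepsilon/2$ and combining with the truncation step delivers $\|f - g_n\|_{W^{1,\Psi}(\Omega)} < \varepsilon$.

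The delicate point is the last one. Theorem~\ref{thm:main} as stated only controls the top-order semi-norm, and without further information the $L^{\Psi}$ distance between $f$ and its approximants cannot be controlled (for instance, one could shift any approximant by an arbitrary constant). The saving grace when $f$ is bounded is that the construction underlying Theorem~\ref{thm:main} preserves $L^{\infty}$ bounds and yields pointwise convergence on such inputs; combined with the doubling of $\Psi$, this closes the gap through dominated convergence, which is precisely the content of the remark \enquote{The remaining approximations do not change the fact that the function is in $L^{\infty}(\Omega)$} in the paragraph preceding the corollary.
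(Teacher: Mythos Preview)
Your approach is exactly what the paper sketches: truncate to reduce to $L^\infty$, then observe that the approximants built in the proof of Theorem~\ref{thm:main} stay uniformly bounded by $\|f_N\|_{L^\infty}$ (since for $k=1$ the $P_i$ are averages and $\sum\varphi_i\equiv 1$), so dominated convergence upgrades the semi-norm convergence to full $W^{1,\Psi}$ convergence. The only small imprecision is that the construction first replaces $f_N$ by a smooth $u$ and then forms $u_n=\varphi_0 u+\sum_i\varphi_i P_i$, so the pointwise convergence is to $u$ (indeed $u_n\equiv u$ on the exhausting sets $\{\varphi_0=1\}$) rather than directly to $f_N$ at its Lebesgue points; a routine diagonal argument over the two approximation layers closes this.
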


The paper is organized as follows. In Section \ref{sec:pr} we recall the Whitney decomposition and list the required prerequisites from the Orlicz theory. In Section \ref{sec:pu} we give a partition of unity for the domain using a Whitney type decomposition. Finally, in Section \ref{sec:thm} we show the proof of Theorem \ref{thm:main}.

\section{Preliminaries}\label{sec:pr}

In this paper, we will usually denote constants by $C$. The value of the constant might change between appearances, even in a chain of inequalities, but the dependence of the constant on a set of fixed parameters is always stated. Sometimes, to clarify the dependence, the parameters are written inside parentheses $C(\cdot)$. 

\subsection{Whitney decomposition}

In this section we recall the Whitney decomposition of a domain in $\R^d$.
Such decomposition is standard in analysis, see for instance Whitney \cite{whitney} or the book of Stein \cite[Chapter VI]{stein}.
We will use a version of the decomposition that was used in \cite{DRS}.

We denote the sidelength of a square $Q$ by $\ell(Q)$.
For notational convenience we start the Whitney decomposition below from squares with sidelength $2^{-1}$. Formally, since we are working with doubling Young functions, by rescaling, we may consider all bounded domains $\Omega \subset \R^2$ to have $\diam(\Omega) \le 1$ in which case no Whitney decomposition would have squares larger than the ones used below regardless of the starting scale.
A Whitney decomposition in the plane consists of dyadic squares. Let us first recall those.

\begin{definition}[Dyadic squares]
A dyadic interval in $\mathbb{R}$ is an interval of the form\\ $[m2^{-k},(m+1)2^{-k}]$ where $m,k\in\mathbb{Z}.$ A dyadic square in $\mathbb{R}^2$ is a product of dyadic intervals of the
same length. That is, a dyadic square is a set of the form
\begin{align*}
[m_12^{-k},(m_1+1)2^{-k}] \times [m_22^{-k},(m_2+1)2^{-k}]
\end{align*}
for some integers $m_1$ and $,m_2$.
\end{definition}

Let us now define a Whitney decomposition following Lemma 2.3 in \cite{DRS}.

\begin{definition}[Whitney decomposition]\label{def:wid}
Let $\Omega$ be a bounded open subset of $\R^2$. A Whitney decomposition is a collection $\tilde\F$ of dyadic squares inside $\Omega$ satisfying the following properties.
\begin{enumerate}[label=\textbf({W\arabic*)},ref=(W\arabic*)]
\item $\Omega=\bigcup_{Q\in \tilde\F}Q $ \label{eka}
\item $\ell(Q)<\dist(Q,\Omega^c)\le3\sqrt2 \ell(Q)=3\diam(Q)$ for all $Q\in\tilde\F$\label{toka}
\item $\Int Q_1\cap \Int Q_2= \emptyset$ for all $Q_1,Q_2\in \tilde\F, Q_1\ne Q_2$\label{kolmas}
\item If $Q_1,Q_2\in \tilde\F$ and $Q_1\cap Q_2\neq\emptyset$, then $\frac{\ell(Q_1)}{\ell(Q_2)}\leq 2$.\label{neljas}
\end{enumerate}
\end{definition}
Suppose $Q_1,\dots,Q_m$ are Whitney squares such that $Q_j$ and $Q_{j+1}$ touch and $\frac{1}{4}\leq \frac{\ell(Q_j)}{\ell(Q_{j+1})}\leq 4$ for all $j,1\leq j\leq m-1.$ We say then $\{Q_1,\dots,Q_m\}$ is a \textit{chain} connecting   $Q_1$ to $Q_m$ and define the length of that chain to be the integer $m$.\\

\subsection{Orlicz Spaces}
\begin{definition}
A function $\Psi\colon [0,\infty)\rightarrow[0,\infty]$ is a \textit{Young function}
if
\begin{align}
\Psi(s)=\displaystyle\int_{0}^s\psi(t)\,\d t,
\end{align}
where  $\psi:[0,\infty]\rightarrow[0,\infty]$ is an increasing, left continuous function which is neither identically zero nor identically infinite on $(0,\infty)$ and which satisfies $\psi(0)=0.$
\end{definition}

A Young function $\Psi$ is convex, increasing, left continuous, $\Psi(0)=0$ and $\Psi(t)\rightarrow \infty $ as $t\rightarrow \infty.$ A continuous Young function with the properties $\Psi(t)=0,$ only if $t=0,$ $\Psi(t)/t\rightarrow \infty $ as $t\rightarrow \infty $ and $\Psi(t)/t\rightarrow 0$ as $t\rightarrow 0$ is called an $N$-function.\\


It follows easily from the convexity and $\Psi(0)=0$, that the function $t\rightarrow\Psi(t)/t$ is increasing. This implies that if $\Psi$ is strictly increasing, then the function $\Psi^{-1}(t)/t$ is decreasing.
A Young function $\Psi$ is doubling if there is a constant $C>0$ such that 
\begin{equation}\label{eq:doubling}
\Psi(2t)\leq C\Psi(t)
\end{equation}
for each $t\geq0$. The smallest constant $C$ satisfying \eqref{eq:doubling} is called the doubling constant of $\Psi$.
\begin{definition}
Given a doubling Young function $\Psi$ and an open set $\Omega\subset \R^2$,
we denote by $L^{\Psi}(\Omega)$, the \textit{Orlicz space} associated to $\Psi$, defined by
 $$
 L^{\Psi}(\Omega)=\left\{ u\colon \Omega\rightarrow\mathbb{R}:\displaystyle\int_{\Omega}\Psi(|u(x)|)\,\d x<\infty\right\}.
 $$
 $L^{\Psi}(\Omega)$ is a Banach space, when equipped with the \textit{Luxemburg norm}
 \[
 \|u\|_{L^\Psi(\Omega)} = \inf\left\{k > 0\,:\, \displaystyle\int_{\Omega}\Psi(k^{-1}|u(x)|)\,\d x\le 1\right \}.
 \]
 \end{definition}

We will not use the Luxemburg norm in this paper, but work with the integrals. This is justified by the following fact.
\begin{lemma}\label{lma:modnorm}
Let $\Psi$ be a doubling Young function. Then
\[
\|u_i-u\|_{L^\Psi(\Omega)} \to 0 \qquad\text{as }i \to \infty
\]
if and only if
\[
\displaystyle\int_{\Omega}\Psi(|u_i(x)-u(x)|)\,\d x \to 0 \qquad\text{as }i \to \infty.
\]
\end{lemma}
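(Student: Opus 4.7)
Set $f_i = u_i - u$, so that the statement reduces to showing that $\|f_i\|_{L^\Psi(\Omega)} \to 0$ if and only if $\int_\Omega \Psi(|f_i|)\,\d x \to 0$. I would prove the two implications separately, since only one of them truly requires the doubling hypothesis.

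For the forward direction, I would exploit convexity of $\Psi$ together with $\Psi(0)=0$, which gives $\Psi(ts) \le t\Psi(s)$ whenever $t\in[0,1]$. Once $i$ is large enough that $\|f_i\|_{L^\Psi(\Omega)} < 1/2$, I set $k_i = 2\|f_i\|_{L^\Psi(\Omega)}$. By the definition of the Luxemburg norm as an infimum, $k_i > \|f_i\|_{L^\Psi(\Omega)}$ forces $\int_\Omega \Psi(|f_i|/k_i)\,\d x \le 1$. Since $k_i \le 1$, the convexity estimate gives
\[
\int_\Omega \Psi(|f_i(x)|)\,\d x \;=\; \int_\Omega \Psi\!\left(k_i \cdot \frac{|f_i(x)|}{k_i}\right)\d x \;\le\; k_i \int_\Omega \Psi\!\left(\frac{|f_i(x)|}{k_i}\right)\d x \;\le\; 2\|f_i\|_{L^\Psi(\Omega)},
\]
which tends to $0$. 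Note that doubling is not used here.

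For the reverse direction, the doubling hypothesis is the crux. Fix $\varepsilon>0$ and choose $n\in\N$ with $2^{-n}\le\varepsilon$. Iterating \eqref{eq:doubling} $n$ times (using monotonicity of $\Psi$ for the first step) yields $\Psi(s/\varepsilon)\le\Psi(2^ns)\le C^n\Psi(s)$ for all $s\ge 0$, where $C$ is the doubling constant. Hence
\[
\int_\Omega \Psi\!\left(\frac{|f_i(x)|}{\varepsilon}\right)\d x \;\le\; C^n \int_\Omega \Psi(|f_i(x)|)\,\d x \;\longrightarrow\; 0,
\]
so for all $i$ large enough this integral is $\le 1$, which by the definition of the Luxemburg norm gives $\|f_i\|_{L^\Psi(\Omega)}\le\varepsilon$. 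Letting $\varepsilon\to 0$ completes the proof.

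The only real obstacle is bookkeeping around the Luxemburg infimum: the fact that any $k$ strictly above the norm is an admissible test value, and the fact that one is free to iterate doubling finitely many times to absorb an arbitrary scaling factor $1/\varepsilon$. Neither is deep, so this lemma is essentially a bookkeeping statement asserting that for $\Delta_2$-regular $\Psi$ the modular and the norm topologies coincide on $L^\Psi(\Omega)$.
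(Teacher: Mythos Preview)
Your proof is correct. Both implications are argued cleanly: the forward direction uses only convexity and $\Psi(0)=0$ via $\Psi(ts)\le t\Psi(s)$ for $t\in[0,1]$, and the reverse direction iterates the doubling inequality to absorb the factor $1/\varepsilon$. The only edge case you leave implicit is $\|f_i\|_{L^\Psi(\Omega)}=0$, where $k_i=0$ would make the division undefined; but then $f_i=0$ a.e.\ and the modular vanishes trivially, so this is harmless.

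As for comparison with the paper: there is nothing to compare. The paper states Lemma~\ref{lma:modnorm} without proof, treating it as a standard fact about $\Delta_2$ Orlicz spaces (which it is; see e.g.\ Rao--Ren or Krasnosel'ski\u{\i}--Ruticki\u{\i}). Your argument is exactly the textbook one and would serve perfectly well if a proof were to be inserted.
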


A direct consequence of Jensen's inequality is the following.

 \begin{lemma}\label{lma:jensen}
Let $\Psi$ be a Young function, $u\in L_{\text{loc}}^1(\R^2)$ and $A\subset \R^2$ of positive and finite measure, then 
$$
\Psi\left(\fint_{A}|u(x)|\,\d x\right)\leq\fint_{A}\Psi(|u(x)|)\,\d x,
$$
where $u_A=\fint_{A}|u(x)|\,\d x$ is the average integral.
 \end{lemma}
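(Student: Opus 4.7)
The statement is precisely Jensen's inequality applied to the convex function $\Psi$ and the probability measure $\frac{1}{|A|}\mathbf{1}_A\,\d x$, so the proof should be a short invocation of that classical inequality together with the convexity of $\Psi$ recorded earlier in this section.

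My plan is as follows. First, I would recall from the discussion after the definition of a Young function that any Young function $\Psi:[0,\infty)\to[0,\infty]$ is convex (being the integral from $0$ of a nondecreasing left-continuous density $\psi$). Since $|A|$ is positive and finite, the measure $\mu := |A|^{-1}\mathbf{1}_A\,\d x$ is a probability measure on $\R^2$, and the hypothesis $u \in L^1_{\mathrm{loc}}(\R^2)$ together with $|A|<\infty$ guarantees that $|u|\in L^1(A,\mu)$, so that the average $u_A=\fint_A |u(x)|\,\d x$ is a well-defined finite, nonnegative real number.

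Next I would apply the standard Jensen inequality for convex functions and probability measures to $|u|$ and $\Psi$, which yields
\begin{equation*}
\Psi\!\left(\int_{\R^2}|u(x)|\,\d\mu(x)\right) \le \int_{\R^2}\Psi(|u(x)|)\,\d\mu(x).
\end{equation*}
Rewriting both sides in terms of the average integral over $A$ gives exactly
\begin{equation*}
\Psi\!\left(\fint_A |u(x)|\,\d x\right) \le \fint_A \Psi(|u(x)|)\,\d x,
\end{equation*}
which is the claimed inequality.

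There is essentially no obstacle here: the only minor subtlety is that $\Psi$ is allowed to take the value $+\infty$, but since $\Psi$ is increasing and left continuous with $\Psi(0)=0$, the inequality is trivially true whenever the right-hand side is infinite, and otherwise $\Psi$ is finite on a neighborhood of $u_A$ and ordinary Jensen applies. The convexity of $\Psi$ is the only nontrivial ingredient, and it has already been noted in the preliminaries.
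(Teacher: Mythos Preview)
Your proposal is correct and coincides with the paper's treatment: the paper offers no proof beyond the line ``A direct consequence of Jensen's inequality is the following,'' and you have simply spelled out that consequence. The only quibble is that $u\in L^1_{\mathrm{loc}}(\R^2)$ and $|A|<\infty$ do not by themselves guarantee $\fint_A|u|\,\d x<\infty$ unless $A$ is bounded, but this is a defect of the lemma's stated hypotheses rather than of your argument.
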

 
\subsection{Poincar\'e inequalities and polynomial approximation}
From now on, $\Psi$ always refers to a doubling Young function.
We will construct an approximation by replacing the original function by approximating polynomials near the boundary of the domain. For this purpose, we will need a few lemmas regarding the polynomials. Here and later on by $|E|$ we denote the Lebesgue measure of a set $E \subset \R^2$.

\begin{lemma}\label{norm_equivalence}
Let $Q$ be any square in $\mathbb{R}^2$ and $P$ be a polynomial of degree $k$ defined in $\mathbb{R}^2$. Let $E,F\subset Q$ be such that $|E|,|F|>\eta |Q|$ where $\eta>0$. Then
$$\int_E \Psi(|P(x)|)\,\d x\leq C \int_F \Psi(|P(x)|)\,\d x,$$
where the constant $C$ depends only on $\eta$, $k$ and the doubling constant of $\Psi$.
\end{lemma}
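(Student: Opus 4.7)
The plan is to reduce to the unit square by scaling, then to extract from $F$ a large subset on which $|P|$ is comparable to $\|P\|_{L^\infty(Q)}$, estimate $\int_F \Psi(|P|)$ from below there, and estimate $\int_E \Psi(|P|)$ from above by $|E|\,\Psi(\|P\|_\infty)$, finally using the doubling property of $\Psi$ to absorb the scale factor.

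After translating and dilating by $\ell(Q)$, we may assume $Q = [0,1]^2$; the integrals on both sides pick up the same factor $\ell(Q)^2$ under this change of variables, so the inequality is scale invariant and it suffices to prove it for $Q$ the unit square with $|E|,|F|\ge \eta$. The key input is a Remez-type (Brudnyi--Ganzburg) inequality on the square: for every polynomial $P$ of degree $\le k$ and every measurable $A\subset Q$ with $|A|\ge \eta/2$,
\begin{equation}
\|P\|_{L^\infty(Q)} \le M(\eta,k)\,\|P\|_{L^\infty(A)}.
\end{equation}
This is standard and depends only on $\eta$ and $k$.

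Using this, I choose $c_0 = c_0(\eta,k)>0$ with $c_0 < 1/M(\eta,k)$ and set
\begin{equation}
F^\ast = \{x\in F : |P(x)| \ge c_0\,\|P\|_{L^\infty(Q)}\}, \qquad B = F\setminus F^\ast.
\end{equation}
If $|B|\ge |F|/2 \ge \eta/2$, then applying the Remez inequality with $A=B$ yields $\|P\|_{L^\infty(Q)} \le M c_0\,\|P\|_{L^\infty(Q)} < \|P\|_{L^\infty(Q)}$, contradicting $P\not\equiv 0$ (the case $P\equiv 0$ being trivial). Hence $|F^\ast|\ge |F|/2 \ge \eta/2$, so since $\Psi$ is increasing,
\begin{equation}
\int_F \Psi(|P(x)|)\,\d x \;\ge\; \int_{F^\ast}\Psi(c_0\|P\|_{L^\infty(Q)})\,\d x \;\ge\; \tfrac{\eta}{2}\,\Psi(c_0\|P\|_{L^\infty(Q)}).
\end{equation}
On the other hand, trivially $\int_E \Psi(|P(x)|)\,\d x \le |E|\,\Psi(\|P\|_{L^\infty(Q)}) \le \Psi(\|P\|_{L^\infty(Q)})$.

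It remains to compare $\Psi(\|P\|_\infty)$ and $\Psi(c_0\|P\|_\infty)$. Choosing $n\in\mathbb{N}$ with $2^{-n}\le c_0$ and iterating the doubling estimate \eqref{eq:doubling} $n$ times gives
\begin{equation}
\Psi(\|P\|_{L^\infty(Q)}) \le C_\Psi^{\,n}\,\Psi(c_0\|P\|_{L^\infty(Q)}),
\end{equation}
where $C_\Psi$ is the doubling constant. Putting these pieces together yields the desired inequality with $C = 2C_\Psi^{\,n}/\eta$, which depends only on $\eta$, $k$, and the doubling constant of $\Psi$.

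The main technical point is the Remez/Brudnyi--Ganzburg inequality; the rest is a routine pigeonhole plus the doubling property of $\Psi$. One subtle place is ensuring the constants in Remez depend on $\eta$ alone (and not on the shape or location of the subset), but the classical formulation gives exactly this, since the bound involves only the measure ratio $|A|/|Q|$.
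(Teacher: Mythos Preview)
Your proof is correct and follows essentially the same route as the paper: extract a large subset of $F$ on which $|P|$ is at least a fixed fraction of $\|P\|_{L^\infty(Q)}$, bound $\int_F\Psi(|P|)$ from below on that subset, bound $\int_E\Psi(|P|)$ from above by $|E|\,\Psi(\|P\|_{L^\infty(Q)})$, and close the gap via doubling. The only difference is that the paper justifies the existence of such a subset by a one-line appeal to norm equivalence on the finite-dimensional space of degree-$k$ polynomials, whereas you invoke the Remez/Brudnyi--Ganzburg inequality explicitly; the latter is precisely the quantitative content needed to make the paper's sentence rigorous with constants depending only on $\eta$ and $k$.
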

\begin{proof}
Since any two norms on a finite dimensional vector space are comparable, there exists a constant $\delta>0$ depending only on $k$ and $\eta$ so that
the set
$$\tilde{F}:=\left\{x\in F:|P(x)|\geq\delta\max_{y\in Q}|P(y)| \right\}$$ 
satisfies $|\tilde{F}|\geq \frac12|F|$.

Therefore, by applying monotonicity and doubling properties on $\Psi$ we obtain
\begin{align*}
\displaystyle\int_F \Psi (|P(x)|)\,\d x&\geq \displaystyle\int_{\tilde{F} }\Psi (|P(x)|)\,\d x
\geq\displaystyle\int_{\tilde{F} }\Psi ( \delta\max_{y\in Q}|P(y)|)\,\d x\\
&\geq C\displaystyle\int_{\tilde{F} }\Psi (\max_{y\in Q}|P(y)|)\,\d x
=C|\tilde{F}|\Psi (\max_{y\in Q}|P(y)|)\\
&\geq \frac{C|\tilde{F}|}{|Q|}\displaystyle\int_Q \Psi (|P(x)|)\,\d x
 \geq C\displaystyle\int_Q \Psi (|P(x)|)\,\d x\\
&\geq C\displaystyle\int_E\Psi ( P(x))\, \d x,
\end{align*}
which gives the claim.
\end{proof}
Given a function $u\in C^{\infty}(\Omega)$, degree $k \in \N$, and a bounded set $E\subset\Omega$ with $|E|>0$, we define (see \cite{J}) the polynomial approximation of $u$ in $E$, $P_k(u,E)$ to be the polynomial of order $k-1$ which satisfies
$$\int_E \nabla^{\alpha}(u-P_k(u,E))=0$$ for each $\alpha=(\alpha_1,\alpha_2)$ such that $|\alpha|=\alpha_1+\alpha_2\leq k-1$. Once $k$ is fixed, we denote the polynomial approximation of $u$ in a dyadic square $Q$ as $P_Q$.

\begin{pro}[$\Psi-\Psi$ Poincar\'e inequality]\label{prop:poinc}
Let $k,m \in \N$. There exists a constant $C$ depending only on $k$, $m$ and the doubling constant of $\Psi$ such that for any domain $\Omega\subset \mathbb{R}^2$, a chain $\{Q_i\}_{i=1}^m$ of dyadic squares in  $\Omega$, and a function $u\in L^{k,\Psi}(\Omega)$ we have
\begin{align*}
\fint_{E}\Psi\left(\frac{|u(x)-P_E(x)|}{\ell(Q_1)^k}\right)\,\d x\le C\fint_{E}\Psi(|\nabla^k u(x)|)\,\d x,
\end{align*} 
where we have abbreviated $E = \bigcup_{i=1}^m Q_i$.
\end{pro}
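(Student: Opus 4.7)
The plan is to bootstrap from a single-square $\Psi$--$\Psi$ Poincar\'e inequality through a chaining argument that uses Lemma~\ref{norm_equivalence} to transfer $\Psi$-integrals of polynomials between adjacent Whitney squares, and finally to swap the resulting reference polynomial $P_{Q_1}$ for $P_E$ by exploiting the linearity of the moment-matching operator.

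For the single-square step, I would show that for any dyadic square $Q$,
\[
\fint_Q \Psi\left(\frac{|u-P_Q|}{\ell(Q)^k}\right)\,\d x \le C\fint_Q \Psi(|\nabla^k u|)\,\d x.
\]
Starting from the integral representation (cf.\ \cite{J}) $u(x)-P_Q(x)=\int_Q K(x,y)\nabla^k u(y)\,\d y$ with $|K(x,y)|\le C|x-y|^{k-2}$ and the identity $\int_Q|x-y|^{k-2}\,\d y\asymp \ell(Q)^k$ for $x\in Q$, one rewrites $(u-P_Q)/\ell(Q)^k$ as the integral of $\nabla^k u$ against a probability measure on $Q$; Jensen's inequality (Lemma~\ref{lma:jensen}), Fubini, and the doubling of $\Psi$ then give the bound.

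Next comes the chaining step, which I view as the heart of the argument. Consecutive $Q_j, Q_{j+1}$ have sidelengths in ratio $[1/4, 4]$ and thus fit inside a common square $S_j$ with $|Q_j|,|Q_{j+1}| \ge \eta |S_j|$ for a universal $\eta > 0$, so Lemma~\ref{norm_equivalence} applied to the degree-$(k-1)$ polynomial $P_{Q_j}-P_{Q_{j+1}}$ on $S_j$ transfers its $\Psi$-integral freely between $Q_j$ and $Q_{j+1}$. Combining this with the triangle identity
\[
P_{Q_j}-P_{Q_{j+1}} = (u-P_{Q_{j+1}}) - (u-P_{Q_j})
\]
evaluated on whichever square admits a direct single-square bound, the telescoping $P_{Q_i}-P_{Q_1} = \sum_{j=1}^{i-1}(P_{Q_{j+1}}-P_{Q_j})$ followed by iterated doubling ($\Psi(\sum a_j)\le C(m)\sum\Psi(a_j)$), and the comparability $\ell(Q_i)\asymp\ell(Q_1)$ valid in a chain of length $m$, summing over $i$ yields
\[
\int_E \Psi\left(\frac{|u-P_{Q_1}|}{\ell(Q_1)^k}\right)\,\d x \le C(m,k,\Psi)\int_E\Psi(|\nabla^k u|)\,\d x.
\]

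Finally, to swap $P_{Q_1}$ for $P_E$: since $P_E(P,E)=P$ for every polynomial $P$ of degree $\le k-1$, linearity of moment matching gives $P_E-P_{Q_1}=P_E(u-P_{Q_1},E)$, itself a polynomial of degree $k-1$. Both $Q_1$ and $E$ sit in a common bounding square $Q^{**}$ with $\ell(Q^{**}) \le C(m)\ell(Q_1)$ and $|Q_1|,|E| \ge \eta(m)|Q^{**}|$, so Lemma~\ref{norm_equivalence} reduces $\int_E \Psi(|P_E-P_{Q_1}|/\ell(Q_1)^k)$ to the corresponding integral over $Q_1$; finite-dimensional norm equivalence on polynomials of degree $\le k-1$, Jensen, and the chained estimate above applied to $\nabla^\alpha u$ with $|\alpha|\le k-1$ then control this by $\int_E\Psi(|\nabla^k u|)$. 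The triangle inequality $|u-P_E|\le|u-P_{Q_1}|+|P_{Q_1}-P_E|$ and doubling close the proof. I expect the chaining step to be the main obstacle, since the naive triangle produces ``cross'' terms $|u-P_{Q_{j+1}}|$ on $Q_j$ that the single-square Poincar\'e on $Q_j$ does not directly control; the remedy is the carefully orchestrated use of Lemma~\ref{norm_equivalence} on the polynomial differences to reorganize these cross terms onto the squares where the single-square bound does apply.
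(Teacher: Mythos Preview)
Your argument is essentially correct and would prove the proposition, but it takes a route quite different from the paper's. The paper does not prove a single-square inequality and then chain; instead it (i) quotes \cite[Lemma~1]{BL1991} for the $k=1$ case on convex sets, (ii) passes from a convex set to the chain $E=\bigcup_{i=1}^m Q_i$ by a bi-Lipschitz change of variables with constant depending only on $m$, and (iii) inducts on $k$: if the $(k-1)$-order inequality is known, one applies it to $v=u-P_E$ (for which $P_E^{(k-1)}(v)=0$, since the moments of order $\le k-2$ vanish) to get $\fint_E \Psi(|u-P_E|/\ell^{k})\le C\fint_E\Psi(|\nabla^{k-1}(u-P_E)|/\ell)$, and then applies the $k=1$ inequality to $\nabla^{k-1}(u-P_E)$, whose mean over $E$ vanishes. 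No single-square kernel estimate, no telescoping along the chain, and no swap from $P_{Q_1}$ to $P_E$ are needed.

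Your approach is more self-contained (it does not import \cite{BL1991}) and is closer in spirit to how the paper subsequently proves Lemma~\ref{le:pi}, so it meshes well with the surrounding text; the paper's approach is considerably shorter. One comment on your chaining step: the difficulty you flag is real, and the remedy ``reorganize the cross terms via Lemma~\ref{norm_equivalence}'' is not by itself enough, since $u-P_{Q_{j+1}}$ is not a polynomial and Lemma~\ref{norm_equivalence} cannot move it from $Q_j$ to $Q_{j+1}$. The standard fix is to introduce an auxiliary square $R_j\subset Q_j\cup Q_{j+1}\subset\Omega$ (possible by the Whitney separation $\dist(Q,\partial\Omega)>\ell(Q)$) that overlaps both $Q_j$ and $Q_{j+1}$ in a fixed proportion; the single-square inequality on $R_j$ then controls $P_{Q_j}-P_{R_j}$ and $P_{R_j}-P_{Q_{j+1}}$ on overlap regions, after which Lemma~\ref{norm_equivalence} legitimately transports the polynomial pieces. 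With that adjustment your plan goes through.
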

\begin{proof}
By \cite[Lemma 1]{BL1991} the claim is true for $k = 1$ in the case where $E$ is convex. By a change of variables, the claim extends to the case $k=1$ and $E = \bigcup_{i=1}^m Q_i$, for the chain $\{Q_i\}_{i=1}^m$. What remains to show is the case $k > 1$.

We do this by induction. Suppose the claim is true for the order $k-1$. Then, using the Poincar\'e inequality first for the $k-1$ orders and the for the first order, we obtain
\begin{align*}
\fint_E\Psi\left(\frac{|u(x)-P_E(x)|}{\ell(Q_1)^{k}}\right)\,\d x 
& \le C \fint_E\Psi\left(\frac{|\nabla^{k-1}(u(x)-P_E(x))|}{\ell(Q_1)}\right)\,\d x \\
& \le C \fint_E\Psi\left(|\nabla^{k}(u(x)-P_E(x))|\right)\,\d x\\
& = C \fint_E\Psi\left(|\nabla^{k}u(x)|\right)\,\d x. \qedhere
\end{align*}
\end{proof}

\begin{lemma}\label{le:pi}
 Let $\Omega \subset\mathbb{R}^2$ be a bounded simply connected domain and $\tilde{\mathcal{F}}$ a Whitney decomposition of $\Omega.$ Let $\{Q_i\}_{i=1}^m$ be a chain of dyadic squares in $\tilde{\mathcal{F}}$.
 Then there exists a constant $C$ depending only on $k$, $m$ and the doubling constant of $\Psi$ such that if $ |\alpha|\leq k,$  we have
 \[
  \int_{Q_1}\Psi\left(\frac{\nabla^\alpha P_{Q_1}-\nabla^\alpha P_{Q_m}}{\ell(Q_1)^{k-|\alpha|}}\right)\leq C\int_{\bigcup_{i=1}^mQ_i}\Psi(|\nabla^ku|).
 \]
 \end{lemma}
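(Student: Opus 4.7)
The overall plan is a telescoping argument along the chain, combined with a pairwise comparison of consecutive polynomials via the $\Psi$-$\Psi$ Poincar\'e inequality of Proposition \ref{prop:poinc} and the polynomial norm-equivalence in Lemma \ref{norm_equivalence}. Write
\[
\nabla^\alpha P_{Q_1}-\nabla^\alpha P_{Q_m}=\sum_{i=1}^{m-1}\nabla^\alpha(P_{Q_i}-P_{Q_{i+1}}),
\]
so by doubling of $\Psi$ (which yields $\Psi(\sum_{i=1}^{m-1}a_i)\le C(m)\sum_{i=1}^{m-1}\Psi(a_i)$) it suffices to bound each summand
\[
\int_{Q_1}\Psi\!\left(\frac{|\nabla^\alpha(P_{Q_i}-P_{Q_{i+1}})|}{\ell(Q_1)^{k-|\alpha|}}\right)
\]
by $C\int_{Q_i\cup Q_{i+1}}\Psi(|\nabla^k u|)$, with a constant allowed to depend on $k$, $m$, and the doubling constant.

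For a fixed $i$, set $R_i:=P_{Q_i}-P_{Q_{i+1}}$, which is a polynomial of degree $\le k-1$. A rescaled version of the argument in Lemma \ref{norm_equivalence} (combining Markov's inequality for polynomials with the doubling property of $\Psi$) gives the pointwise bound $\max_Q |\nabla^\alpha R_i|\le C\ell(Q)^{-|\alpha|}\max_Q|R_i|$ on any square $Q$, and then for any such $Q$
\[
\int_{Q}\Psi\!\left(\frac{|\nabla^\alpha R_i|}{\ell(Q)^{k-|\alpha|}}\right)\le C\int_{Q}\Psi\!\left(\frac{|R_i|}{\ell(Q)^{k}}\right).
\]
Applying this with $Q=Q_1$ reduces the problem to controlling $\int_{Q_1}\Psi(|R_i|/\ell(Q_1)^{k})$. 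Since the whole chain has diameter at most $C(m)\ell(Q_1)$ by property \ref{neljas} of the Whitney decomposition, both $Q_1$ and $Q_i$ are contained in a common square $\tilde Q$ of sidelength $\le C(m)\ell(Q_1)$ with $|Q_1|,|Q_i|\ge \eta(m)|\tilde Q|$. Applying Lemma \ref{norm_equivalence} to $R_i$ on $\tilde Q$ transfers the integral from $Q_1$ to $Q_i$, and because $\ell(Q_1)^{k}$ and $\ell(Q_i)^{k}$ are comparable up to a constant $C(m)$, doubling of $\Psi$ absorbs the rescaling, yielding
\[
\int_{Q_1}\Psi\!\left(\frac{|R_i|}{\ell(Q_1)^{k}}\right)\le C\int_{Q_i}\Psi\!\left(\frac{|R_i|}{\ell(Q_i)^{k}}\right).
\]

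Finally, to estimate the right-hand side I insert the averaged polynomial $P=P_{Q_i\cup Q_{i+1}}$ and use the triangle inequality with doubling:
\[
\int_{Q_i}\Psi\!\left(\frac{|R_i|}{\ell(Q_i)^{k}}\right)\le C\int_{Q_i}\Psi\!\left(\frac{|P_{Q_i}-P|}{\ell(Q_i)^{k}}\right)+C\int_{Q_i}\Psi\!\left(\frac{|P-P_{Q_{i+1}}|}{\ell(Q_i)^{k}}\right).
\]
For the first summand, one further triangle step against $u$ plus Proposition \ref{prop:poinc} applied to the single square $Q_i$ and to the length-$2$ chain $\{Q_i,Q_{i+1}\}$ gives the desired bound by $C\int_{Q_i\cup Q_{i+1}}\Psi(|\nabla^k u|)$. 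For the second summand, I first invoke Lemma \ref{norm_equivalence} once more to transport the integral of the degree-$(k-1)$ polynomial $P-P_{Q_{i+1}}$ from $Q_i$ to $Q_{i+1}$ (which are touching and of comparable size by \ref{neljas}), and then repeat the same trick of inserting $u$ and applying Proposition \ref{prop:poinc} on $\{Q_{i+1}\}$ and on $\{Q_i,Q_{i+1}\}$. The main obstacle is bookkeeping: keeping constants uniform under all these changes of integration domain, in particular verifying the Markov-type Orlicz inequality for derivatives of polynomials, and making sure the chain condition \ref{neljas} is used consistently so that constants depend only on $k$, $m$, and the doubling constant of $\Psi$.
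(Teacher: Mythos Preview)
Your argument is correct and uses the same essential ingredients as the paper---doubling of $\Psi$, the polynomial norm equivalence of Lemma \ref{norm_equivalence}, and the $\Psi$--$\Psi$ Poincar\'e inequality of Proposition \ref{prop:poinc}---but organises them differently. The paper avoids both the telescoping and the explicit Markov-type derivative reduction: it inserts the single polynomial $P_E$ for the full chain $E=\bigcup_{i=1}^m Q_i$, writes $P_{Q_1}-P_{Q_m}=(P_{Q_1}-P_E)-(P_{Q_m}-P_E)$, transfers the second term from $Q_1$ to $Q_m$ via Lemma \ref{norm_equivalence}, then inserts $u$ and applies Proposition \ref{prop:poinc} directly to $\nabla^\alpha(u-P_{Q_1})$, $\nabla^\alpha(u-P_{Q_m})$, and $\nabla^\alpha(u-P_E)$ at order $k-|\alpha|$ (using that $\nabla^\alpha P_S$ is the order-$(k-|\alpha|)$ approximant of $\nabla^\alpha u$ on $S$). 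This is shorter and sidesteps your Markov step entirely. Your telescoping route has the minor advantage that it only ever invokes Proposition \ref{prop:poinc} on chains of length one or two, but at the cost of the extra norm transfers from $Q_1$ to each $Q_i$ and the polynomial-derivative Orlicz inequality, which you would need to justify more carefully than in your sketch.
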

 \begin{proof}
Let us abbreviate $E = \bigcup_{i=1}^mQ_i$.
Now, using the triangle inequality, Lemma \ref{norm_equivalence}, the doubling property of $\Psi$, then triangle inequality again and Lemma \ref{prop:poinc} we obtain
\begin{align*}
\begin{split}
\displaystyle\int_{Q_1}\Psi & \left(\displaystyle\dfrac{|\nabla^{\alpha}P_{Q_1}-\nabla^{\alpha}P_{Q_{m}}|}{\ell(Q_1)^{k-|\alpha|}}\right)\\
&\le \displaystyle\int_{Q_1}\Psi\left(\displaystyle\dfrac{|\nabla^{\alpha}P_{Q_1}-\nabla^{\alpha}P_{Q_{m}}+\nabla^{\alpha}P_{E}-\nabla^{\alpha}P_{E}|}{\ell(Q_1)^{k-|\alpha|}}\right)\\
&\le C\displaystyle\int_{Q_1}\Psi\left(\displaystyle\dfrac{|\nabla^{\alpha}P_{Q_1}-\nabla^{\alpha}P_{E} |}{\ell(Q_1)^{k-|\alpha|}}\right) + 
C\displaystyle\int_{Q_{m}}\Psi\left(\displaystyle\dfrac{
\nabla^{\alpha}P_{Q_{m}}-\nabla^{\alpha}P_{E}|}{\ell(Q_m)^{k-|\alpha|}}\right)\\
&\leq C\displaystyle\int_{Q_1}\Psi\left(\frac{|\nabla^{\alpha}(u-P_{Q_1})|}{\ell(Q_1)^{k-|\alpha|}}\right)
+ C\displaystyle\int_{Q_{m}}\Psi\left(\frac{|\nabla^{\alpha}(u-P_{Q_{m}})|}{\ell(Q_m)^{k-|\alpha|}}\right)\\
& \quad + C\displaystyle\int_{E}\Psi\left(\frac{|\nabla^{\alpha}(u-P_{E})|}{\ell(Q_1)^{k-|\alpha|}}\right)\\ 
   &\leq C\displaystyle\int_{\bigcup_{i=1}^m Q_i}\Psi(|\nabla^ku|).\qedhere
\end{split}
\end{align*}
\end{proof}

\section{Decomposition and partition of unity}\label{sec:pu}

In this section we recall the decomposition of the domain $\Omega$ and the associated partition of unity that was obtained in \cite{DRS}. In order to make the comparison between this paper and \cite{DRS} easy, we use here the notation from \cite{DRS}.

\subsection{Decomposition of the domain}

We fix a square $Q_0$, which is one of the largest Whitney squares in $\Omega$.
For each $n \in \N$, the domain $\Omega$ is then divided into a core part $D_n$, and a boundary layer, which is the union of sets $\tilde H_i$, see Figure \ref{fig:decomp}.
\begin{figure}
\includegraphics[width=0.8\textwidth]{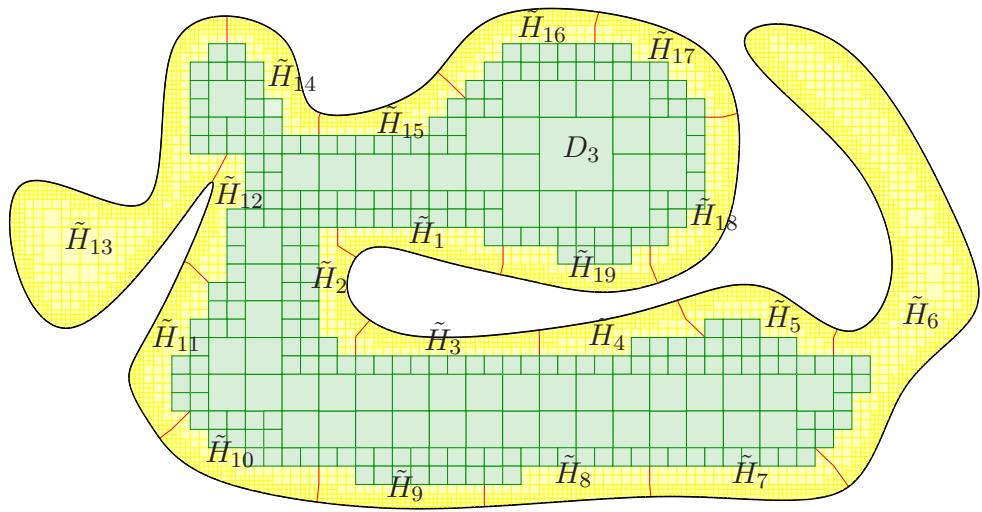}
     \caption{The domain $\Omega$ is decomposed into a core part $D_3$ (obtained as a connected component of a union of Whitney squares) and boundary parts $\tilde H_i$. A partition of unity is made using this decomposition.}
   \label{fig:decomp}
 \end{figure}
The core part $D_n$ is the connected component containing $Q_0$ of the interior of the union of Whitney squares of side-length at least $2^{-n}$. The construction of the boundary parts $\{\tilde H_i\}_{i=1}^l$ is more involved. The sets $\tilde H_i$ are labeled so that $\tilde H_i\cap \tilde H_j \ne \emptyset$ if and only if $|i-j|\le 1$ in a cyclical manner.

The sets $\tilde H_i$ are expanded by taking a connected component $H_i$ of a $2^{-n-3}$ neighbourhood of $\tilde H_i$.
The main property of the decomposition is that these expanded sets still satisfy
$H_i\cap H_j \ne \emptyset$ if and only if $|i-j|\le 1$ in a cyclical manner (Lemma 3.4 in \cite{DRS}). Moreover, since the neighbourhoods are taken in the Euclidean distance, we may use an Euclidean partition of unity.

For each $i$ we associate a Whitney square $Q_i \subset D_n$ of side length $2^{-n}$ so that $H_i \cap Q_i \ne \emptyset$ and, more importantly, if $|i-j|\le 1$, there is a chain of Whitney squares $\mathcal Q_{i,j}$ with length bounded by a universal constant connecting $Q_i$ and $Q_j$.

\subsection{Partition of unity}\label{ssec:pu}

Using the decomposition of the domain $\Omega$ introduced above, we make a partition of unity for the domain. The partition of unity consists of functions $\psi_i$, $i \in \{0,\dots,l\}$ with the following properties:
\begin{enumerate}
\item The function $\psi_0$ is supported in $B(D_{n},\frac{2^{-n}}{10})$.
\item For $i \ge 1$ the function $\psi_i$ is supported in $H_i$.
\item For all $i$, $0\leq \psi_i\leq 1$.
\item $\sum \psi_i \equiv 1$ on $\Omega$.
\item For all $i$, $|\nabla^{\alpha}\psi_i|\leq C({\alpha})2^{-n|\alpha|}$ for all multi-indeces $\alpha$.
\end{enumerate}

\section{Proof of Theorem \ref{thm:main}}\label{sec:thm}

In this section we prove Theorem \ref{thm:main}, using the results of Section \ref{sec:pr} and the partition of union from \cite{DRS} that was recalled in Section \ref{sec:pu}. The polynomial approximation is exactly the same as in \cite{DRS}. What is different is the way the estimates are carried out using Poincar\'e inequalities. Since the usual Poincar\'e inequality is replaced by 
a $\Psi-\Psi$ Poincar\'e inequality (Proposition \ref{prop:poinc}), we need to be more careful with
the chains of inequalities.

Given a function $u \in L^{k,\Psi}(\Omega)$ and $\varepsilon > 0$, our aim is to find a function
$u_{\varepsilon}\in W^{k,\infty}(\Omega)\cap C^{\infty}(\Omega)$ satisfying $\left\|\nabla^ku_{\varepsilon}-\nabla^ku\right\|_{L^{\Psi}(\Omega)}\lesssim \varepsilon.$
We start by noting that we may assume $u \in L^{k,\Psi}(\Omega) \cap C^{\infty}(\Omega)$, since smooth functions are dense in $L^{k,\Psi}(\Omega)$, see \cite{DT}.

For $n \in \N$ fixed, we let $D_n$ and $\{H_i\}_{i=1}^l$ be as in Section \ref{sec:pu}. With these we define
a function $u_n\in W^{k,\infty}(\Omega)\cap C^{\infty}(\Omega)$ by setting for all $x \in \Omega$
\[
u_{n}(x)=\varphi_0(x)u(x)+\sum_{i=1}^{l}\varphi_i(x)P_{i}(x),
\]
where we have abbreviated $P_i := P_{Q_i}$ with the choice of squares $Q_i$ done in Section \ref{ssec:pu}.
Clearly, $u_{n}\in W^{k,\infty}(\Omega)\cap C^{\infty}(\Omega)$.
Therefore, what remains to show is that 
\begin{equation}\label{eq:conv}
\left\|\nabla^ku_{n}-\nabla^ku\right\|_{L^{\Psi}(\Omega)} \to 0, \qquad \text{as }n \to \infty.
\end{equation}

First of all, by the definition of $u_n$, we have 
\[
u_n(x) = u(x) \qquad \text{ for all }x \in \{z \in \Omega\,:\,\varphi_0(z) = 1\} \subset D_{n-1}.
\]
Also, for all $i \in \{1 \dots, l\}$, since $P_i$ is a degree $n-1$ polynomial, we have
\[
\nabla^ku_{n}(x) = 0 \qquad \text{ for all }x \in \{z \in \Omega\,:\,\varphi_i(z) = 1\}.
\]
Therefore,
\begin{align*}
\left\|\nabla^ku_{n}-\nabla^ku\right\|_{L^{\Psi}(\Omega)} & = \left\|\nabla^ku_{n}-\nabla^ku\right\|_{L^{\Psi}(\{\varphi_0 \ne 1\})}\\
& \le \left\|\nabla^ku\right\|_{L^{\Psi}(\Omega \setminus D_{n-1})}
+ \left\|\nabla^ku_{n}\right\|_{L^{\Psi}(\bigcup_{i=1}^l A_i)},
\end{align*}
where we have written $A_i := \{x \in \Omega\,:\,0 < \varphi_i(x) < 1\}$ for $i \in \{1,\dots,l\}$.
Since the sets $D_n$ increasingly exhaust the domain $\Omega$, we have
\[
\left\|\nabla^ku\right\|_{L^{\Psi}(\Omega \setminus D_{n-1})} \to 0 \qquad \text{as }n \to \infty.
\]
Thus, it remains to show that
\begin{equation}\label{eq:remterm1}
\left\|\nabla^ku_{n}\right\|_{L^{\Psi}(\bigcup_{i=1}^lA_i)} \to 0 \qquad \text{as }n \to \infty,
\end{equation}
or equivalently, via Lemma \ref{lma:modnorm}, that for each multi-index $\alpha$ with $|\alpha| = k$ we have
\begin{equation}\label{eq:remterm}
\displaystyle\int_{\bigcup_{i=1}^lA_i}\Psi(|\nabla^{\alpha}u_{n}(x)|) \to 0 \qquad \text{as }n \to \infty.
\end{equation}

In order to show \eqref{eq:remterm}, we estimate for each $i \in \{1,\dots,l\}$ and multi-index $\alpha$ with $|\alpha| = k$, by using the triangle inequality and Jensen's inequality
\begin{align*}
\displaystyle\int_{A_i}\Psi(|\nabla^{\alpha}u_{n}(x)|)\,\d x
& = \displaystyle\int_{A_i}\Psi\left(\left|\nabla^{\alpha}\left(\varphi_0(x)u(x)+\sum_{j=1}^{l}\varphi_j(x)P_{j}(x)\right)\right|\right)\,\d x\\
& \le 
\sum_{\beta\leq\alpha}\displaystyle\int_{A_i}\Psi(|\nabla^{\beta}u(x)-\nabla^{\beta}P_{i}(x)||\nabla^{\alpha-\beta}\varphi_0(x)|)\,\d x\\
&\quad  + \sum_{\beta\leq\alpha}\displaystyle\int_{A_i}\sum_{j=1}^{l}\Psi(|\nabla^{\beta}P_{j}(x)-\nabla^{\beta}P_{i}(x)||\nabla^{\alpha-\beta}\varphi_j(x)|)\,\d x.
\end{align*}
We estimate the above two terms separately.

Let us take $\beta \le \alpha$ and write 
\begin{align*}
\displaystyle\int_{A_i} & \Psi(|\nabla^{\beta}u(x)-\nabla^{\beta}P_{i}(x)||\nabla^{\alpha-\beta}\varphi_0(x)|)\,\d x\\
&  = \sum_{Q}\displaystyle\int_{Q} \Psi(|\nabla^{\beta}u(x)-\nabla^{\beta}P_{i}(x)|C 2^{n(|\alpha|-|\beta|)})\,\d x,
\end{align*}
where the integral is first split into squares $Q$ for which $Q \cap A_i \cap A_0 \ne \emptyset$. There are only a uniformly bounded amount of such squares $Q$, and for each such $Q$ we have
\begin{align*}
\displaystyle\int_{Q} & \Psi(|\nabla^{\beta}u(x)-\nabla^{\beta}P_{i}(x)|C 2^{n(|\alpha|-|\beta|)})\,\d x\\
& \le C\displaystyle\int_{Q} \Psi\left(\frac{|\nabla^{\beta}u(x)-\nabla^{\beta}P_{Q}(x)|}{2^{-n(|\alpha|-|\beta|)}}\right)\,\d x
 + C\displaystyle\int_{Q} \Psi\left(\frac{|\nabla^{\beta}P_Q(x)-\nabla^{\beta}P_{i}(x)|}{2^{-n(|\alpha|-|\beta|)}}\right)\,\d x \\
 & \le C\displaystyle\int_{Q}\Psi(|\nabla^{k}u(x)|)\,\d x + C\displaystyle\int_{A_i}\Psi(|\nabla^{k}u(x)|)\,\d x,
\end{align*}
using the doubling property of $\Psi$, triangle inequality, Proposition \ref{prop:poinc} and Lemma \ref{le:pi}.

Next we estimate for $\beta \le \alpha$,
\begin{align*}
\displaystyle\int_{A_i} \sum_{j=1}^{l} & \Psi(|\nabla^{\beta}P_{j}(x)-\nabla^{\beta}P_{i}(x)||\nabla^{\alpha-\beta}\varphi_j(x)|)\,\d x\\
& = \sum_{j=i-1}^{i+1} \displaystyle\int_{A_i}  \Psi(|\nabla^{\beta}P_{j}(x)-\nabla^{\beta}P_{i}(x)|C 2^{n(|\alpha|-|\beta|)})\,\d x\\
& \le C\sum_{j=i-1}^{i+1} \displaystyle\int_{Q_i}  \Psi\left(\frac{|\nabla^{\beta}P_{j}(x)-\nabla^{\beta}P_{i}(x)|}{2^{-n(|\alpha|-|\beta|)}}\right)\,\d x\\
& \le C\sum_{j=i-1}^{i+1}\displaystyle\int_{A_i\cup A_j}\Psi(|\nabla^{k}u(x)|)\,\d x
\end{align*}
using the fact that $H_i \cap H_j\ne \emptyset$ if and only if $|i-j| \le 1$, Lemma \ref{norm_equivalence} and Lemma \ref{le:pi}.

Combining the above estimates and using the fact that there is only a uniform number of overlaps for the estimates we have
\[
\displaystyle\int_{\bigcup_{i=1}^lA_i}\Psi(|\nabla^{\alpha}u_{n}(x)|) \le 
C \displaystyle\int_{\bigcup_{i=1}^lA_i}\Psi(|\nabla^{\alpha}u(x)|),
\]
giving \eqref{eq:remterm}. This proves the theorem.

\bibliographystyle{amsplain}
\bibliography{SoboReferences}

\providecommand{\bysame}{\leavevmode\hbox to3em{\hrulefill}\thinspace}
\providecommand{\MR}{\relax\ifhmode\unskip\space\fi MR }
\providecommand{\MRhref}[2]{%
  \href{http://www.ams.org/mathscinet-getitem?mr=#1}{#2}
}
\providecommand{\href}[2]{#2}
\begin{thebibliography}{10}

\bibitem{AHS}
David~R. Adams and Ritva Hurri-Syrj\"anen, \emph{Vanishing exponential
  integrability for functions whose gradients belong to
  {$L^n(\log(e+L))^\alpha$}}, J. Funct. Anal. \textbf{197} (2003), no.~1,
  162--178. \MR{1957679}

\bibitem{A75}
Robert~A. Adams, \emph{Sobolev spaces}, Academic Press [A subsidiary of
  Harcourt Brace Jovanovich, Publishers], New York-London, 1975, Pure and
  Applied Mathematics, Vol. 65. \MR{0450957}

\bibitem{Amick}
Charles~J. Amick, \emph{Approximation by smooth functions in {S}obolev spaces},
  Bull. London Math. Soc. \textbf{11} (1979), no.~1, 37--40. \MR{535794}

\bibitem{BL1991}
Tilak Bhattacharya and Francesco Leonetti, \emph{A new {P}oincar\'e inequality
  and its application to the regularity of minimizers of integral functionals
  with nonstandard growth}, Nonlinear Anal. \textbf{17} (1991), no.~9,
  833--839. \MR{1131493}

\bibitem{B10}
Jana Bj\"orn, \emph{Orlicz-{P}oincar\'e inequalities, maximal functions and
  {$A_\Phi$}-conditions}, Proc. Roy. Soc. Edinburgh Sect. A \textbf{140}
  (2010), no.~1, 31--48. \MR{2592711}

\bibitem{C96b}
Andrea Cianchi, \emph{Continuity properties of functions from
  {O}rlicz-{S}obolev spaces and embedding theorems}, Ann. Scuola Norm. Sup.
  Pisa Cl. Sci. (4) \textbf{23} (1996), no.~3, 575--608. \MR{1440034}

\bibitem{C96}
\bysame, \emph{A sharp embedding theorem for {O}rlicz-{S}obolev spaces},
  Indiana Univ. Math. J. \textbf{45} (1996), no.~1, 39--65. \MR{1406683}

\bibitem{dS81}
Almir~Joaquim de~Souza, \emph{An extension operator for {O}rlicz-{S}obolev
  spaces}, An. Acad. Brasil. Ci\^enc. \textbf{53} (1981), no.~1, 9--12.
  \MR{623636}

\bibitem{DJthesis}
Noel~R. DeJarnette, \emph{Self improving {O}rlicz-{P}oincare inequalities},
  ProQuest LLC, Ann Arbor, MI, 2013, Thesis (Ph.D.)--University of Illinois at
  Urbana-Champaign. \MR{3251391}

\bibitem{DT}
Thomas~K. Donaldson and Neil~S. Trudinger, \emph{Orlicz-{S}obolev spaces and
  imbedding theorems}, J. Functional Analysis \textbf{8} (1971), 52--75.
  \MR{0301500}

\bibitem{EGO}
David~E. Edmunds, Petr Gurka, and Bohum\'\i~r Opic, \emph{Double exponential
  integrability of convolution operators in generalized {L}orentz-{Z}ygmund
  spaces}, Indiana Univ. Math. J. \textbf{44} (1995), no.~1, 19--43.
  \MR{1336431}

\bibitem{FLS}
Nicola Fusco, Pierre-Louis Lions, and Carlo Sbordone, \emph{Sobolev imbedding
  theorems in borderline cases}, Proc. Amer. Math. Soc. \textbf{124} (1996),
  no.~2, 561--565. \MR{1301025}

\bibitem{H10}
Toni Heikkinen, \emph{Sharp self-improving properties of generalized
  {O}rlicz-{P}oincar\'e inequalities in connected metric measure spaces},
  Indiana Univ. Math. J. \textbf{59} (2010), no.~3, 957--986. \MR{2779068}

\bibitem{H12}
\bysame, \emph{Characterizations of {O}rlicz-{S}obolev spaces by means of
  generalized {O}rlicz-{P}oincar\'e inequalities}, J. Funct. Spaces Appl.
  (2012), Art. ID 426067, 15. \MR{2881765}

\bibitem{HT10}
Toni Heikkinen and Heli Tuominen, \emph{Orlicz-{S}obolev extensions and measure
  density condition}, J. Math. Anal. Appl. \textbf{368} (2010), no.~2,
  508--524. \MR{2643819}

\bibitem{IKO}
Tadeusz Iwaniec, Pekka Koskela, and Jani Onninen, \emph{Mappings of finite
  distortion: monotonicity and continuity}, Invent. Math. \textbf{144} (2001),
  no.~3, 507--531. \MR{1833892}

\bibitem{J}
Peter~W. Jones, \emph{Quasiconformal mappings and extendability of functions in
  {S}obolev spaces}, Acta Math. \textbf{147} (1981), no.~1-2, 71--88.
  \MR{631089}

\bibitem{KKMOZ}
Janne Kauhanen, Pekka Koskela, Jan Mal\'y, Jani Onninen, and Xiao Zhong,
  \emph{Mappings of finite distortion: sharp {O}rlicz-conditions}, Rev. Mat.
  Iberoamericana \textbf{19} (2003), no.~3, 857--872. \MR{2053566}

\bibitem{Kolsrud}
Torbj\"orn Kolsrud, \emph{Approximation by smooth functions in {S}obolev
  spaces, a counterexample}, Bull. London Math. Soc. \textbf{13} (1981), no.~2,
  167--169. \MR{608104}

\bibitem{Koskela}
Pekka Koskela, \emph{Removable sets for {S}obolev spaces}, Ark. Mat.
  \textbf{37} (1999), no.~2, 291--304. \MR{1714767}

\bibitem{KRZ}
Pekka Koskela, Tapio Rajala, and Yi~Ru-Ya Zhang, \emph{A geometric
  characterization of planar {S}obolev extension domains}, Preprint.

\bibitem{KTZ}
\bysame, \emph{A density problem for {S}obolev spaces on {G}romov hyperbolic
  domains}, Nonlinear Anal. \textbf{154} (2017), 189--209. \MR{3614650}

\bibitem{KZ}
Pekka Koskela and Yi~Ru-Ya Zhang, \emph{A density problem for {S}obolev spaces
  on planar domains}, Arch. Ration. Mech. Anal. \textbf{222} (2016), no.~1,
  1--14. \MR{3519964}

\bibitem{KJF}
Alois Kufner, Old\v~rich John, and Svatopluk Fu\v~c\'\i k, \emph{Function
  spaces}, Noordhoff International Publishing, Leyden; Academia, Prague, 1977,
  Monographs and Textbooks on Mechanics of Solids and Fluids; Mechanics:
  Analysis. \MR{0482102}

\bibitem{Lewis}
John~L. Lewis, \emph{Approximation of {S}obolev functions in {J}ordan domains},
  Ark. Mat. \textbf{25} (1987), no.~2, 255--264. \MR{923410}

\bibitem{MS}
Norman~G. Meyers and James Serrin, \emph{{$H=W$}}, Proc. Nat. Acad. Sci. U.S.A.
  \textbf{51} (1964), 1055--1056. \MR{0164252}

\bibitem{Nandi}
Debanjan Nandi, \emph{A density result for homogeneous sobolev spaces},
  preprint (2018).

\bibitem{DRS}
Debanjan Nandi, Tapio Rajala, and Timo Schultz, \emph{A density result for
  homogeneous {S}obolev spaces on planar domains}, Potential Anal. (to appear).

\bibitem{OFarrell}
Anthony~G. O'Farrell, \emph{An example on {S}obolev space approximation}, Bull.
  London Math. Soc. \textbf{29} (1997), no.~4, 470--474. \MR{1446566}

\bibitem{RR}
M.~M. Rao and Z.~D. Ren, \emph{Theory of {O}rlicz spaces}, Monographs and
  Textbooks in Pure and Applied Mathematics, vol. 146, Marcel Dekker, Inc., New
  York, 1991. \MR{1113700}

\bibitem{Shvartsman}
Pavel Shvartsman, \emph{On {S}obolev extension domains in {$\mathbb R^n$}}, J.
  Funct. Anal. \textbf{258} (2010), no.~7, 2205--2245. \MR{2584745}

\bibitem{stein}
Elias~M. Stein, \emph{Singular integrals and differentiability properties of
  functions}, Princeton Mathematical Series, No. 30, Princeton University
  Press, Princeton, N.J., 1970. \MR{0290095}

\bibitem{Tuominen}
Heli Tuominen, \emph{Characterization of {O}rlicz-{S}obolev space}, Ark. Mat.
  \textbf{45} (2007), no.~1, 123--139. \MR{2312957}

\bibitem{whitney}
Hassler Whitney, \emph{Analytic extensions of differentiable functions defined
  in closed sets}, Trans. Amer. Math. Soc. \textbf{36} (1934), no.~1, 63--89.
  \MR{1501735}

\end{thebibliography}
\end{document}